\theoremstyle{plain}   
\newtheorem{theorem}{Theorem} 
\newtheorem{lemma}{Lemma}
\newtheorem{proposition}{Proposition}
\theoremstyle{definition}
\newtheorem{definition}{Definition}
\theoremstyle{remark}
\newtheorem{remark}{Remark}
\newtheorem{example}{Example}
\newcommand{\T}{\mathbb{T}}
\newcommand{\C}{\mathbb{C}}
\newcommand{\on}[1] {\operatorname{#1}}
\newcommand{\ind}[2] {\on{ind}\arrowvert_{#1}^{#2}}
\begin{document}
\title{Explicit formulas for 2-Characters}

\author{Ang\'{e}lica M. Osorno}

\begin{abstract}
Ganter and Kapranov associated a 2-character to 2-representations of a finite group. Elgueta classified 2-representations in the category of 2-vector spaces $2Vect_k$ in terms of cohomological data. We give an explicit formula for the 2-character in terms of this cohomological data and derive some consequences.
\end{abstract}

\maketitle

\section{Introduction}

In \cite{HKR}, Hopkins, Kuhn and Ravenel develop a theory of generalized characters that computes $E^*(BG)$ for the $n$-th Morava $E$-theory. The characters in this case are class functions defined on the set of $n$-tuples of commuting elements of $G$ whose order is a power of $p$. In \cite{nora}, Ganter and Kapranov define a 2-character for a 2-representation of a finite group in a 2-category. This 2-character is a function that assigns an element of the field $k$ to every pair $(g,h)$ of commuting elements in $G$. Ganter and Kapranov proved that these 2-characters satisfy the same formulas as the characters in \cite{HKR} for $n=2$.

The purpose of this paper is to find an explicit description of the 2-characters of a 2-representation in the 2-category of 2-vector spaces, $2Vect_k$. In order to find this description, we first review the algebraic classification of 2-representations. In \cite{elgueta}, it is shown that every equivalence class of 2-representations is given uniquely by a finite $G$-set $S$ and a class in $H^2(G;k^S)$. We present a streamlined approach to this result.

We then proceed to compute the 2-character in terms of this associated cohomology class. Using these computations we prove that 2-characters  are additive and multiplicative with respect to direct sum and tensor product of 2-representations. 

Given a 2-representation $\rho$ of $H\subseteq G$, Ganter and Kapranov also define the induced representation, and compute its character in terms of the character of $\rho$. Using our cohomological classification of representations, we identify the induced representation in terms of the cohomological data for $\rho$ using the Shapiro isomorphism.

Finally, using the explicit computation of 2-characters we give an example of two non-equivalent 2-representations that have the same character, thus showing that this assignment is not faithful.

The 2-categorical language can be cumbersome, so we review some of the terminology and constructions. For more background on 2-categories we refer the reader to \cite{nora} and \cite{elgueta}. 

The author would like to thank Nora Ganter for many helpful discussions that lead to some of the approaches and results presented here. The author would also like to thank Mark Behrens for his comments on earlier versions of this paper.

\section{2-representations and their characters}

Following \cite{nora}, we review the notions of 2-representations
of a group and character theory.

\begin{definition}\label{2rep} Let $\mathcal{C}$ be a 2-category and $G$ a group. A
\emph{2-representation} of $G$ in $\mathcal{C}$ is a lax 2-functor
from $G$ (viewed as a discrete 2-category) to $\mathcal{C}$.
\end{definition}

This amounts to the following data:

\begin{enumerate}
\item an object $V$ of $\mathcal{C}$,
\item for every $g\in G$, a 1-morphism $\rho _g:V\rightarrow V$,
\item a 2-isomorphism $\phi _1 : \rho _1 \Rightarrow \mathbf{1}
_{\mathcal{C}}$,
\item for every pair $g,h\in G$, a 2-isomorphism $\phi
_{g,h}:\rho _g \circ \rho _h \Rightarrow \rho_{gh}$.

This data has to satisfy the following conditions:
\item (associativity) for every $g,h,k \in G$,
$$\phi_{(gh,k)}(\phi_{g,h}\circ\rho_k) =
\phi_{(g,hk)}(\rho_g\circ\phi_{h,k}),$$

\item for any $g\in G$,
$$\phi_{1,g} = \phi_1\circ\rho_g \quad\text{and}\quad
        \phi_{g,1} = \rho_g\circ\phi_1.$$
\end{enumerate}

There is a 2-category in which we are particularly interested: the
2-category of 2-vector spaces. There are several 2-categories
which are 2-equivalent and give equivalent 2-representation
theories. We will use the definition in \cite{kapranov}.

\begin{definition}
Let $k$ be a field. The 2-category $2Vect_k$ has as objects $[n]$,
where $n\in \{0,1,2,\dots\}$. For integers $m,n$, the set of 1-morphisms
$1Hom_{2Vect_k}([m],[n])$ is the set of $m\times n$ matrices with
entries in $k$-vector spaces. These are called 2-matrices. Composition is given by matrix
multiplication using tensor product and direct sum. For 2-matrices
$A$ and $B$ of the same size, 2-morphisms are given by matrices of
linear maps $\phi$, with $\phi_{ij}:A_{ij}\rightarrow B_{ij}$.
\end{definition}

Thus a 2-representation of a group $G$ in $2Vect_k$ consists of the following data:

\begin{enumerate}
 \item A natural number $n$, called the dimension,
 \item for every $g\in G$, an $n\times n$ 2-matrix, $\rho _g$
 \item a 2-isomorphism $\phi _1 : \rho _1 \Rightarrow \mathbf{1}
_{[n]}$,
 \item for every pair $g,h\in G$, a 2-isomorphism $\phi
_{g,h}:\rho _g \circ \rho _h \Rightarrow \rho _{gh}$.
\end{enumerate}

The isomorphisms $\phi _{g,h}$ and $\phi _1$ are subject to the same conditions expressed above.

The following approach is similar to a more general result in \cite{elgueta}. The result is presented in a coordinate-free approach.

\begin{proposition}
 There is a one-to-one correspondence between equivalence classes of 2-representations of $G$ in $2Vect_{\C}$ and pairs $(S,[c])$ where $S$ is a finite $G$-set and $[c]\in H^2(G;(\C^{\times})^S)$. Here $(\C ^{\times})^S$ denotes $(\C ^{\times})^{|S|}$ as a $G$-module through the action of $G$ on $S$.
\end{proposition}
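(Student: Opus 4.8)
The plan is to unpack what a 2-representation of $G$ in $2Vect_{\C}$ really is and show that, up to equivalence, all the data reduces to a $G$-set plus a 2-cocycle class. First I would analyze the 1-morphisms $\rho_g$: since each $\rho_g$ is an $n\times n$ 2-matrix and the $\phi_{g,h}$ are 2-isomorphisms $\rho_g\circ\rho_h\Rightarrow\rho_{gh}$, and $\rho_1\cong \mathbf{1}_{[n]}$, each $\rho_g$ must be invertible under composition; an invertible 2-matrix over $\C$ is (isomorphic to) a permutation matrix with one-dimensional entries in the nonzero slots. Thus the assignment $g\mapsto[\text{permutation underlying }\rho_g]$ defines a homomorphism $G\to\Sigma_n$, i.e.\ an action of $G$ on the set $S=\{1,\dots,n\}$; this is the $G$-set. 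I would make this precise by choosing, for each $g$, an isomorphism of $\rho_g$ with the ``standard'' permutation 2-matrix $P_{\sigma_g}$ whose nonzero entries are the ground field $\C$, using the $\phi_1$ and $\phi_{g,h}$ to transport structure. This replaces a given 2-representation by an equivalent one in a normal form.

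Next, with the $\rho_g$ put in permutation form, I would examine the associators $\phi_{g,h}$. After normalization, $\phi_{g,h}$ is a 2-morphism $P_{\sigma_g}\circ P_{\sigma_h}\Rightarrow P_{\sigma_{gh}}$, and since $\sigma$ is now a genuine homomorphism, $P_{\sigma_g}\circ P_{\sigma_h}=P_{\sigma_{gh}}$ on the nose (composition of permutation matrices with $\C$-entries, using $\C\otimes_\C\C\cong\C$), so each $\phi_{g,h}$ is an automorphism of $P_{\sigma_{gh}}$, which amounts to a choice of nonzero scalar in each nonzero slot — that is, an element $c(g,h)\in(\C^\times)^S$ where the $s$-component records the scalar in the slot indexed by $s$. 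The associativity axiom (5) translates directly into the 2-cocycle condition for $c$ as a function $G\times G\to(\C^\times)^S$ with the $G$-module structure coming from the action on $S$, and axiom (6) pins down the normalization $c(1,g)=c(g,1)=1$. So every 2-representation yields a pair $(S,c)$ with $c$ a normalized 2-cocycle.

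Then I would check the two remaining directions. For well-definedness on equivalence classes: two equivalent 2-representations (pseudonatural equivalence of lax 2-functors) with the same underlying $G$-set differ by a 1-isomorphism $V\Rightarrow V$ in each degree together with compatible 2-morphisms, and chasing the pseudonaturality square shows the two cocycles differ by the coboundary of the $(\C^\times)^S$-valued 1-cochain recorded by the scalars of that 1-isomorphism; conversely, changing $S$ by a $G$-set isomorphism just relabels coordinates. For surjectivity: given any $(S,[c])$, build the 2-representation with $n=|S|$, $\rho_g=P_{\sigma_g}$ the permutation 2-matrix of the action, and $\phi_{g,h}=c(g,h)$; the cocycle identity gives associativity, so this is a genuine 2-representation mapping to $(S,[c])$. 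Injectivity then follows by reversing the equivalence construction: if two 2-representations give cohomologous cocycles over isomorphic $G$-sets, the relabeling plus the 1-cochain witnessing the coboundary assembles into an explicit equivalence.

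The main obstacle, and the step requiring the most care, is the first one: showing that an invertible 1-morphism in $2Vect_{\C}$ is forced into permutation form, and doing the normalization \emph{coherently} — i.e.\ choosing the isomorphisms $\rho_g\Rightarrow P_{\sigma_g}$ so that the resulting new associators are honestly valued in automorphisms of permutation matrices rather than in some a priori messier gadget, and verifying that this normalization does not secretly change the equivalence class. This is where one must use $\phi_1$ to handle $g=1$ and use the interchange law in the 2-category to see that the transported $\phi_{g,h}$ still satisfy the axioms. Once the permutation normal form is established, everything else is a (careful but essentially formal) dictionary between the 2-categorical axioms and the definition of group cohomology with coefficients in $(\C^\times)^S$.
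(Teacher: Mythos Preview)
Your proposal is correct and follows essentially the same route as the paper: observe that each $\rho_g$ must be a weakly invertible 2-matrix and hence (up to isomorphism) a permutation matrix, extract the $G$-set from the resulting homomorphism $G\to\Sigma_n$, read off $\phi_{g,h}$ as an $(\C^\times)^S$-valued 2-cochain whose cocycle condition is exactly axiom~(5), and identify pseudonatural equivalence with the coboundary relation. The paper treats the coherent-normalization step you flag as the main obstacle rather briskly (simply asserting the permutation form ``up to isomorphism''), so your extra care there is appropriate even if the paper does not dwell on it.
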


\begin{proof}
Note that since $\rho _g\rho _{g^{-1}}$ is isomorphic to $\mathbf{1}
_{[n]}$, each $\rho _{g}$ is given by a weakly invertible 2-matrix.
This means that the entries in $\rho _{g}$ can only be 0 and
1-dimensional vector spaces, with exactly one entry per row and
column being 1-dimensional. That is, up to isomorphism, $\rho _{g}$
is given by an $n\times n$ permutation matrix. Thus, we can
think of $\rho$ as a map

$$\rho:G\rightarrow \Sigma _n.$$

Now, let us turn our attention to $\phi _{g,h}$ and $\phi _1$. The 2-matrices $\rho _{gh}$ and $\rho _g\rho _h$ have only one
nonzero entry per row and column, and those entries are
1-dimensional vector spaces. Thus, to specify the 2-isomorphism
$\phi _{g,h}$ all we need to give is a sequence of $n$ nonzero
complex numbers $\{c_i(g,h)\}$ which give the isomorphism for the
nonzero entry in the $i$th row.

Condition (5) in the definition of a 2-representation implies

$$c_{\sigma^{-1}(i)}(h,k)\cdot c_i(g,hk)=c_i(gh,k)\cdot c_i(g,h),$$

where $\sigma$ is the permutation represented by $\rho _g$.

We can think of $(\C^{\times}) ^n$ as a $G$-module through $\rho$,
where $g\cdot \overrightarrow{a}=\rho _g\overrightarrow{a}$ in
matrix notation. We will denote this $G$-module by $(\C^{\times}
)_{\rho}^n$.

We can then think of $c$ as a 2-cochain $G\times G \rightarrow
(\C^{\times} )_{\rho}^n$. Then the condition above becomes the
cocycle condition:

$$(\delta c)(g,h,k)=g\cdot c(h,k)-c(gh,k)+c(g,hk)-c(g,h)=0$$

Here we are using additive notation for the component wise multiplication group structure of $\C^{\times} )^n$.

On the other hand, Condition (6) of Definition \ref{2rep} with $g=1$ implies that $\phi_1$
is given by multiplication by $c(1,1)$.

Hence, we can say that up to isomorphism, a 2-representation is
determined by a group homomorphism $\rho: G \rightarrow \Sigma _n$
and a 2-cocycle $c\in C^2(G;(\C^{\times} )_{\rho}^n)$. This
coincides with the notion in \cite{elgueta}.

In this new language, we would like to identify the equivalence
classes of representations. Two representations are equivalent if
there exists a pseudonatural equivalence between the vectors. A
pseudonatural transformation is a 1-morphism $f:[n]\rightarrow
[n']$ together with a 2-isomorphism $\psi(g):\rho '_g\circ f
\Rightarrow f\circ \rho _g$ for every $g\in G$, satisfying two
coherence conditions:

\begin{enumerate}
\item For all $g,h\in G$, $\psi(gh)\cdot (\phi_{g,h}'\circ \mathbf{1}_f)=(\mathbf{1}_f
\circ \phi _{g,h})\cdot (\psi(g)\circ \mathbf{1}_{\rho _h})\cdot
(\mathbf{1}_{\rho '_g}\circ \psi (h))$,
\item $\phi _1 '\circ \mathbf{1}_f=(\mathbf{1}_f\circ \phi
_1)\cdot \psi(1)$.
\end{enumerate}

This pseudonatural transformation is an equivalence if and only if
$f$ is a weakly invertible 1-morphism, that is if $n=n'$ and $f$
is given by a permutation matrix.

Assume two 2-representations are equivalent. If these 2-representations are given by the same map $\rho : G
\rightarrow \Sigma _n$ and $f=\mathbf{1}_{[n]}$, the 2-isomorphism
$\psi(g)$ is given by a sequence of $n$ nonzero complex numbers
$b_i(g)$ which give the isomorphisms on the nonzero 1-dimensional
vector spaces in each row. Again, we can think of these vectors of complex numbers
as a 1-cochain $G\rightarrow (\C^{\times}) ^n$. The two coherence
conditions imply for all $i$:

$$b_i(gh)c_i '(g,h)=c_i(g,h)b_i(g)b_{\sigma^{-1}(i)}(h),$$

where $c$ and $c'$ are the cocycles giving the two
representations. If we write this in additive notation we get:

$$(\delta b)(g,h)=g\cdot b(h)-b(gh)+b(g)=c'(g,h)-c(g,h).$$

That is, $c$ and $c'$ are cohomologous cocycles  in
$C^2(G;(\C^{\times} )_{\rho}^n)$ if and only if they give
equivalent representations.

In general the representations given by $\rho$, $[c]$ and $\rho '$, $[c']$ are equivalent if and only if
there exists a permutation $f\in \Sigma _n$ such that $\rho
'_g=f \rho_g f^{-1}$ and $[c']=[f\cdot c]$. This follows from the
assertions above. This proves the theorem.

\end{proof}

\section{Direct sum and tensor product}

There is a notion of direct sum and tensor product in $2Vect_k$ as noted in \cite{kapranov}. Direct sum is given as follows:

\begin{itemize}
 \item On objects: $[n]\oplus[m]=[n+m]$,
 \item on 1-morphisms is given by block sum of 2-matrices,
 \item on 2-morphisms is given by block sum of matrices of linear maps.
\end{itemize}

Tensor product is given as follows:

\begin{itemize}
 \item On objects: $[n]\otimes [n']=[nn']$.
 \item on 1-morphisms: let $f:[m]\rightarrow [n]$, $f':[m']\rightarrow [n']$ be 1-morphisms, then $f\otimes f':[mm']\rightarrow [nn']$ is the 2-matrix with $(i,i'),(j,j')$-entry equal to $f_{ij}\otimes f'_{i'j'}$, where the set of $mm'$ elements is labeled by pairs $(i,i')$, where $i=1,\dots, m$, $i'=1,\dots , m'$, with the order: $(1,1), (1,2), \dots , (1,n'), (2,1), \dots, (m,m')$ and similarly for $nn'$.
 \item on 2-morphisms: similarly as above.
\end{itemize}

These operations can be extended to 2-representations on $2Vect_k$ by taking the appropriate direct sum and/or tensor product of the respective objects, 1-morphisms and 2-morphisms. It is not hard to prove that we obtain a new 2-representation in both cases.

\section{Induced 2-Representations}

In \cite{nora}, Ganter and Kapranov define the notion of an induced representation given $H\subseteq G$ and inclusion of finite groups. Here we analyze the case of $2Vect_k$ following their explicit description in Remark 7.2.

Let $\rho : H\rightarrow \Sigma _n, [c]\in H^2(H; (\C ^{\times})^n _{\rho})$ be a 2-representation on $[n]$. Let $S$ be the corresponding $H$-set. Let $m$ be the index of $H$ in $G$. Let $\mathcal R = \{ r_1, \dots, r_m \}$ be a system of representatives of the left cosets of $H$ in $G$.

Then $\ind HG (\rho)$ is a 2-representation of $G$ of dimension $nm$. The matrix for $\ind HG \rho _g$ is a block matrix, with blocks of size $n \times n$, where the $(i,j)$-th block is given as follows:

$$(\ind HG\rho _g)_{ij}=
  \begin{cases}
    \rho _h & \text{if}\quad gr_j = r_ih, h\in H\\
    0       & \text{else.}
  \end{cases}
  $$.  
  
Now we turn our attention to $\ind HG \phi _{g_1,g_2}$. Notice that

$$
(\ind HG\rho _{g_1})\circ_1(\ind HG\rho _{g_2})_{ik} =
\phantom{XXXXXXXXXXXXXXXX}$$
$$ {\phantom{XXXXXX} = \begin{cases}
    \rho _{h_1}\circ_1\rho _{h_2} & \text{if}\quad g_1r_j = r_ih_1 \text{
      and }  g_2r_k = r_jh_2 \\
    0       & \text{else.}
  \end{cases}}
$$
  
and the $(i,k)$-th block is not zero precisely when since $g_1g_2r_k=r_ih_1h_2$, that is, when

$$(\ind HG \rho _{(g_1g_2)})_{ik}=\rho _{(h_1h_2)}.$$

Here $\circ_1$ denotes vertical composition of 2-morphisms.

On this block then 

$$(\ind HG \phi _{g_1,g_2})_{ik}=\phi _{h_1,h_2}.$$

Notice that the $G$-set given by $\ind HG \rho : G \rightarrow \Sigma _{nm}$ is precisely $\ind HG =G\times _H S\cong \mathcal R \times S$. Thus, the $G$-module $(\C^{\times})^{\ind HG S}$ is $\ind HG[(\C^{\times})^S]$.

The corresponding cocycle is then

$$(\ind HG c)_{(r_i,s)}(g_1,g_2)=c_s(h_1,h_2),$$

where $(r_i,s)\in \mathcal R \times S$ and $h_1, h_2$ are as above.

One can trace that $[\ind HG c]$ is the image of $[c]$ under the Shapiro isomorphism

$$H^2(H; (\C^{\times})^S) \cong  H^2 (G; \ind HG[(\C^{\times})^S]) \cong H^2 (G; (\C^{\times})^{\ind HG S}).$$

In particular, let $(S, [c])$ be an equivalence class of a representation of $G$. Let 

$$S=\coprod_{i=1}^k G/H_i$$

be the decomposition of $S$ into $G$ orbits. Then the chain of isomorphisms  

$$H^2(G; (\C^{\times})^S)\cong \bigoplus _{i=1}^k H^2(G; (\C^{\times})^{G/H_i}) \cong \bigoplus _{i=1}^k H^2(H_i; (\C^{\times}))$$

sends $[c]$ to $[c_1]\oplus \cdots \oplus [c_k]$ to $[d_1]\oplus \cdots \oplus [d_k]$, where $[d_i]$ is the image of $[c_i]$ under the Shapiro isomorphism.

This means that the representation given by $(G/H_i, [c_i])$ is the induced representation of a 1-dimensional representation $(\ast, [d_i])$ of $H_i$ 

We thus recover the following proposition, which coincides with \cite[Prop. 7.3]{nora}.

\begin{proposition}
 Every representation is the direct sum of induced 1-dimensional representations.
\end{proposition}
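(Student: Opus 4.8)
The plan is to assemble this proposition from the structural results already established, so the proof is essentially a matter of citing the classification and the computation of induced 2-representations in the right order. First I would invoke the Proposition on equivalence classes: an arbitrary 2-representation of $G$ in $2Vect_\C$ is, up to equivalence, determined by a pair $(S,[c])$ with $S$ a finite $G$-set and $[c]\in H^2(G;(\C^\times)^S)$. So it suffices to exhibit any such $(S,[c])$ as a direct sum of induced 1-dimensional representations.

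Next I would decompose $S$ into $G$-orbits, $S=\coprod_{i=1}^k G/H_i$. This decomposition induces a splitting of the $G$-module $(\C^\times)^S \cong \bigoplus_i (\C^\times)^{G/H_i}$, hence a splitting $H^2(G;(\C^\times)^S)\cong\bigoplus_i H^2(G;(\C^\times)^{G/H_i})$, under which $[c]$ corresponds to a tuple $([c_1],\dots,[c_k])$. The key point is that the 2-representation attached to $(S,[c])$ is the direct sum of the 2-representations attached to the $(G/H_i,[c_i])$ — this should follow from the definition of direct sum in $2Vect_\C$ together with the fact that the classification is compatible with block sums of permutation matrices and componentwise products of cocycles. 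I would state this compatibility explicitly (it is a mild unwinding of the direct-sum description in Section 3) and reduce to the case of a transitive $G$-set, i.e. $S=G/H$ for a single subgroup $H$.

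For the transitive case I would apply the discussion of induced 2-representations from Section 4: with $H_i\le G$ and $[d_i]\in H^2(H_i;\C^\times)$ the image of $[c_i]$ under the Shapiro isomorphism $H^2(G;(\C^\times)^{G/H_i})\cong H^2(H_i;\C^\times)$, the representation $(G/H_i,[c_i])$ is exactly $\ind{H_i}{G}$ of the $1$-dimensional representation $(\ast,[d_i])$ of $H_i$; this is precisely the content of the paragraph preceding the statement, where $[\ind{H}{G}c]$ was identified as the image of $[c]$ under Shapiro. Assembling, $(S,[c])\simeq\bigoplus_{i=1}^k \ind{H_i}{G}(\ast,[d_i])$, which is what we want.

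The main obstacle is not conceptual but bookkeeping: one must be careful that the isomorphism $H^2(G;(\C^\times)^S)\cong\bigoplus_i H^2(G;(\C^\times)^{G/H_i})$ really matches the categorical direct sum of 2-representations, i.e. that passing through the cohomological classification turns "$\oplus$ of $G$-modules at the cocycle level" into "$\oplus$ of 2-representations in $2Vect_\C$". This requires checking that block sum of permutation matrices realizes the $G$-set coproduct and that the cocycle $c$ on the coproduct is literally the tuple of its restrictions — both of which are immediate from the definitions in Sections 2 and 3, but worth spelling out so that the reduction to the transitive case is rigorous rather than merely plausible.
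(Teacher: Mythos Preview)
Your proposal is correct and follows essentially the same approach as the paper: decompose the $G$-set $S$ into orbits $\coprod_i G/H_i$, use the resulting splitting of $H^2(G;(\C^\times)^S)$ to write $[c]$ as a tuple $([c_1],\dots,[c_k])$, and then identify each $(G/H_i,[c_i])$ with $\ind{H_i}{G}(\ast,[d_i])$ via the Shapiro isomorphism. The only difference is that you are more explicit about verifying that the cohomological splitting matches the categorical direct sum in $2Vect_\C$, which the paper leaves implicit.
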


\section{2-characters}
We would like know what the character introduced in \cite{nora}
looks like in terms of $\rho$ and $c$.

\begin{definition}
Given a 2-category $\mathcal{C}$, let $A$ be an object and
$F:A\rightarrow A$ a 1-endomorphism. We define the
\emph{categorical trace} as

$$ \T r(F)=2Hom_{\mathcal{C}}(\mathbf{1} _A, F).$$

\end{definition}

Note that since $\mathcal{C}$ is a 2-category, $End(A)=Hom(A,A)$ is a category. This definition gives a functor $\T r : End(A)\rightarrow Set$.  If $\alpha : F \Rightarrow G$ is a 2-morphism between $F,G\in End(A)$, 

$$\T r(\alpha): 2Hom_{\mathcal{C}}(\mathbf{1} _A, F) \rightarrow 2Hom_{\mathcal{C}}(\mathbf{1} _A, G)$$

is given by composition with $\alpha$. 

Let $\mathcal{D}$ be a category. We recall that a 2-category is enriched over $\mathcal{D}$ if the categories $Hom(A,B)$ are enriched over $\mathcal{D}$ for all objects $A,B$.

If the 2-category $\mathcal{C}$ is enriched
over $\mathcal{D}$ then $\T r$ is a functor into
$\mathcal{D}$.

When $\mathcal{C}=Vect_k$, $A=[n]$ and $F$ is an $n\times n$
matrix $[F_{ij}]$ of vector spaces. Then the following equality holds

$$\T r(F)=\bigoplus _{i=1} ^n F_{ii}.$$

Note that the categorical trace is additive and multiplicative in the following sense. Let $F:[n]\rightarrow [n]$ and $G:[m]\rightarrow [m]$. Then $F\oplus G:[n+m]\rightarrow [n+m]$ and

$$\T r(F\oplus G)=\bigoplus _{i=1} ^{n+m} (F\oplus G)_{ii}=(\bigoplus _{i=1} ^n F_{ii}) \oplus (\bigoplus _{i=1} ^m G_{ii})=\T r(F) \oplus \T r(G).$$

Also, $F\otimes G:[n\cdot m]\rightarrow [n\cdot m]$ and

$$\T r(F\otimes G)=\negthickspace \negthickspace \negthickspace \bigoplus _{\substack{i=1,\dots,n\\ j=1,\dots,m}} \negthickspace \negthickspace \negthickspace (F\otimes G)_{(i,j)(i,j)}= \negthickspace \negthickspace \negthickspace \bigoplus _{\substack{i=1,\dots,n\\ j=1,\dots,m}} \negthickspace \negthickspace \negthickspace F_{ii} \otimes G_{jj}=(\bigoplus _{i=1} ^n F_{ii}) \otimes (\bigoplus _{i=1} ^m G_{jj})=\T r(F) \otimes \T r(F).$$

Given $F:A\rightarrow A$, let $G:A\rightarrow B$ be an equivalence
with quasi-inverse $H$. Then the trace is conjugation invariant
in the sense that there is an isomorphism:

$$\psi:\T r(F)\rightarrow \T r(GFH).$$

Let $\rho$ be a 2-representation of a group $G$ in $\mathcal{C}$. The character of
$\rho$ is given by assigning to each $g\in G$ the trace $\T r(\rho
_g)$. In this case, since we have the maps $\phi
_{g,h}$ and $\phi _1$ we can use the conjugation invariance above
to get a map

$$\psi _g (h): \T r(\rho _h)\rightarrow \T r(\rho _{ghg^{-1}}).$$

When $\rho$ is a 2-representation in $2Vect_k$ and $g$ and $h$
commute, $\T r(\rho _h)$ and $\T r(\rho _{ghg^{-1}})$ are the same
vector space. Thus Ganter and Kapranov introduce the following definition:

\begin{definition}
The \emph{2-character} of $\rho$ is a function on pairs
of commuting elements:

$$\chi _{\rho} (h,g)=trace\bigl(\psi _g (h): \T r(\rho _h)\rightarrow \T r(\rho
_h)\bigr).$$

\end{definition}

It is invariant under simultaneous conjugation.

Note that since $\T r$, $\psi _g (h)$, and $trace$ are all additive and multiplicative, we deduce that the character is also additive and multiplicative. (We will later give a different, explicit proof of this fact).

We are now prepared to compute the character of a 2-representation
 on $2Vect_{\C}$.

\begin{theorem}\label{big}
 The categorical character of a 2-representation given by $\rho :G\rightarrow \Sigma _n$ and $c\in
C^2(G;(\C^{\times} )_{\rho}^n)$ is

$$\chi (g,h)=\sum _{i=\rho _g(i)=\rho _h(i)} c_i(g,g^{-1})^{-1}c_i(1,1)^{-1}c_i(h,g^{-1})c_i(g,hg^{-1}).$$

\end{theorem}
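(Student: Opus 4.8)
The plan is to unwind each ingredient of the definition of $\chi(g,h)$ explicitly in coordinates and then compose. First I would compute $\T r(\rho_h)$: since $\rho_h$ is (up to isomorphism) the permutation matrix of $\rho_h\in\Sigma_n$, the entry $(\rho_h)_{ii}$ is a $1$-dimensional vector space when $\rho_h(i)=i$ and $0$ otherwise, so $\T r(\rho_h)=\bigoplus_{i\,:\,\rho_h(i)=i}(\rho_h)_{ii}$ has a basis indexed by the fixed points of $\rho_h$. The map $\psi_g(h)$ is obtained from the conjugation-invariance isomorphism $\T r(F)\to\T r(GFH)$ applied with $F=\rho_h$, $G=\rho_g$, $H$ a quasi-inverse of $\rho_g$ (namely $\rho_{g^{-1}}$ together with the structural $2$-isomorphisms built from the $\phi$'s and $\phi_1$); when $g$ and $h$ commute the target $\T r(\rho_g\rho_h\rho_{g^{-1}})$ is canonically $\T r(\rho_{ghg^{-1}})=\T r(\rho_h)$, so $\psi_g(h)$ is an endomorphism of the fixed-point space, and I must identify its diagonal entries.

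The key step is therefore to trace through the definition of the conjugation-invariance isomorphism $\psi$. I would write it as the composite: first insert $\phi_{g,g^{-1}}^{-1}$ (and $\phi_1$) to pass from $\mathbf 1\Rightarrow\rho_h$ to $\mathbf 1\Rightarrow\rho_g\rho_{g^{-1}}\rho_h$ (or rather to the appropriately associated triple product), then whisker with the given $2$-isomorphism relating $\rho_h$ and $\rho_{ghg^{-1}}=\rho_{g^{-1}}\rho_h$-type rearrangements, i.e. use $\phi_{h,g^{-1}}$ and $\phi_{g,hg^{-1}}$ to reassociate $\rho_g\circ\rho_h\circ\rho_{g^{-1}}$ into $\rho_{ghg^{-1}}$. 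Concretely, on the $1$-dimensional vector space sitting in the $i$-th diagonal slot (for $i$ a common fixed point of $\rho_g$ and $\rho_h$), each of these $2$-isomorphisms acts as multiplication by the corresponding component of the cocycle $c$ evaluated at the relevant pair: $\phi_{g,g^{-1}}$ contributes $c_i(g,g^{-1})$, $\phi_1$ contributes $c_i(1,1)$, $\phi_{h,g^{-1}}$ contributes $c_i(h,g^{-1})$ (or at an index permuted by $\rho_g$, which on a common fixed point is just $i$), and $\phi_{g,hg^{-1}}$ contributes $c_i(g,hg^{-1})$, with the first two appearing inverted because they enter the composite as $\phi^{-1}$ or on the "source" side. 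Collecting these scalars for each common fixed point $i$ and summing over $i$ (which is the vector-space $trace$ in the definition of $\chi$) gives exactly
$$\chi(g,h)=\sum_{i\,:\,\rho_g(i)=\rho_h(i)=i}c_i(g,g^{-1})^{-1}c_i(1,1)^{-1}c_i(h,g^{-1})c_i(g,hg^{-1}),$$
which, noting that $\rho_g(i)=\rho_h(i)$ for a common fixed point means precisely $\rho_g(i)=\rho_h(i)=i$ is not forced but in a permutation a common value on index $i$ with both being permutations gives $\rho_g(i)=\rho_h(i)$ — I should be careful here and match the paper's indexing convention $i=\rho_g(i)=\rho_h(i)$.

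The main obstacle I expect is bookkeeping: getting the associativity bracketing of the triple composite $\rho_g\circ\rho_h\circ\rho_{g^{-1}}$ right, tracking which $\phi$'s are whiskered on the left versus the right, and being consistent about whether a cocycle component is evaluated at index $i$ or at $\rho_g(i)$ — on common fixed points these agree, but the argument only makes sense if one checks that the relevant indices are indeed fixed. A secondary subtlety is the precise form of the quasi-inverse data for $\rho_g$ (the unit/counit of the equivalence $\rho_g\dashv\rho_{g^{-1}}$ are built from $\phi_{g,g^{-1}}$, $\phi_{g^{-1},g}$ and $\phi_1$, and one must use the commutativity of $g,h$ together with cocycle identities, e.g. the relation $c_{\sigma^{-1}(i)}(h,k)\cdot c_i(g,hk)=c_i(gh,k)\cdot c_i(g,h)$ from the proof of Proposition 1, to simplify the resulting product of four scalars into the stated four factors). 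Once the normalization conventions are pinned down, the computation is a finite product of scalars and the theorem follows.
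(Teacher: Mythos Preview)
Your plan is correct and follows essentially the same route as the paper: write $\psi_g(h)=\phi_{g,h,g^{-1}}\cdot(\rho_g\circ\varphi\circ\rho_{g^{-1}})\cdot\phi_{g,g^{-1}}^{-1}\cdot\phi_1^{-1}$, read off each $\phi$ as multiplication by the corresponding $c_i$, and take the trace. The one point you leave slightly implicit and should make explicit is that $\rho_g\circ\varphi\circ\rho_{g^{-1}}$ sends the basis vector $e_i$ of $\T r(\rho_h)$ to a scalar times $e_{\rho_g(i)}$, so the trace of $\psi_g(h)$ picks up exactly those $i$ with $\rho_h(i)=i$ \emph{and} $\rho_g(i)=i$; this is what forces the sum to run over common fixed points rather than merely fixed points of $\rho_h$.
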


\begin{proof}
Let $\varphi : \mathbf{1}_{[n]} \Rightarrow \rho _h$, that is, a
vector in $\T r(\rho _h)$. Note that this is an $n\times n$ matrix
with zero entries everywhere except in those diagonal entries that
are nonzero in $\rho _h$. Without loss of generality, we can
assume those are in the first $k$ rows (we can conjugate $\rho$ by
a permutation matrix $f$ and change $c$ accordingly to $f\cdot c$;
this will just give a reordering of the indices which does not
change the dimension of $\T r (\rho _h)$ nor the map $\psi _g(h)$). Thus we have

$$\varphi=
\begin{bmatrix}
  a_1 & \cdots & 0 & 0 & \cdots & 0\cr
  \vdots & \ddots & \vdots & \vdots & \ddots & \vdots\cr
  0 & \cdots & a_k & 0 & \cdots & 0\cr
    0 & \cdots & 0 & 0 & \cdots & 0\cr
  \vdots & \ddots & \vdots & \vdots & \ddots & \vdots\cr
  0 & \cdots & 0 & 0 & \cdots & 0\cr
 \end{bmatrix},$$

where $k$ is the number of indices fixed by $\rho_h$.

We would like to compute now

$$\psi_g(h)=\phi_{g,h,g^{-1}}\cdot (\rho_g\circ\varphi\circ\rho_{g^{-1}})\cdot\phi_{g,g^{-1}}^{-1}\cdot\phi_1^{-1}$$

for commuting pairs $(g,h)$. Note that since $g$ and $h$ commute, $\rho _h$ and $\rho_g\circ\rho_h\circ\rho_{g^{-1}}$ are isomorphic, in particular, the nonzero entries in the diagonal are in the same position. Thus we have

$$\rho _g\circ\varphi\circ\rho _{g^{-1}}=
\begin{bmatrix}
  a_{\rho _{g^{-1}}(1)} & \cdots & 0 & 0 & \cdots & 0\cr
  \vdots & \ddots & \vdots & \vdots & \ddots & \vdots\cr
  0 & \cdots & a_{\rho _{g^{-1}}(k)} & 0 & \cdots & 0\cr
    0 & \cdots & 0 & 0 & \cdots & 0\cr
  \vdots & \ddots & \vdots & \vdots & \ddots & \vdots\cr
  0 & \cdots & 0 & 0 & \cdots & 0\cr
 \end{bmatrix}.$$

On the other hand, composition with the isomorphisms $\phi$ is given just by multiplication by the appropriate scalar in the appropriate row:

$$\psi_g(h)=\phi_{g,h,g^{-1}}\cdot (\rho_g\circ\varphi\circ\rho_{g^{-1}})\cdot\phi_{g,g^{-1}}^{-1}\cdot\phi_1^{-1}=
\begin{bmatrix}
  b_1 & \cdots & 0 & 0 & \cdots & 0\cr
  \vdots & \ddots & \vdots & \vdots & \ddots & \vdots\cr
  0 & \cdots & b_k & 0 & \cdots & 0\cr
    0 & \cdots & 0 & 0 & \cdots & 0\cr
  \vdots & \ddots & \vdots & \vdots & \ddots & \vdots\cr
  0 & \cdots & 0 & 0 & \cdots & 0\cr
 \end{bmatrix},$$

where

$$b_i= c_i(g,g^{-1})^{-1}c_i(1,1)^{-1}c_i(h,g^{-1})c_i(g,hg^{-1})a_{\rho_g^{-1}(i)}.$$

We can think of the matrices $\{e_i\}_{i=1}^k$, where $e_i$ is the $n\times n$ matrix with 1 in the $(i,i)$ entry and zero everywhere else, as a basis for $\T r(\rho _h)$. Then the contribution to the character comes from the indices $i$ fixed by both $\rho _h$ and $\rho _g$:

$$\chi (g,h)=\sum _{i=\rho _g(i)=\rho _h(i)} c_i(g,g^{-1})^{-1}c_i(1,1)^{-1}c_i(h,g^{-1})c_i(g,hg^{-1}).$$

\end{proof}

\begin{lemma}
 The character is invariant under equivalence. 
\end{lemma}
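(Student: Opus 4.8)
The plan is to use the explicit formula in Theorem~\ref{big} together with the description of equivalence of 2-representations obtained in the Proposition. Recall from there that two 2-representations $(\rho, [c])$ and $(\rho', [c'])$ on $[n]$ are equivalent if and only if there is a permutation $f \in \Sigma_n$ with $\rho'_g = f\rho_g f^{-1}$ and $[c'] = [f\cdot c]$. So it suffices to check that $\chi$ is unchanged under the two basic moves: (i) replacing $c$ by a cohomologous cocycle $c + \delta b$ while keeping $\rho$ fixed, and (ii) replacing $(\rho, c)$ by $(f\rho f^{-1}, f\cdot c)$ for a permutation $f$. Since an arbitrary equivalence is a composite of these, invariance will follow.

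For move (ii) the argument is essentially bookkeeping: conjugating $\rho$ by $f$ and transporting $c$ accordingly just relabels the indices $i \mapsto f(i)$, and the set $\{i : \rho_g(i) = \rho_h(i) = i\}$ is carried bijectively onto the corresponding set for $f\rho f^{-1}$, with each summand $c_i(g,g^{-1})^{-1}c_i(1,1)^{-1}c_i(h,g^{-1})c_i(g,hg^{-1})$ matching the summand indexed by $f(i)$ in the formula for $(f\rho f^{-1}, f\cdot c)$. Hence the sum is unchanged. This is exactly the "without loss of generality" reduction already invoked in the proof of Theorem~\ref{big}, so one can simply cite it.

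The substantive step is move (i). Fix $\rho$ and a $1$-cochain $b \in C^1(G;(\C^\times)^n_\rho)$, and write $c' = c + \delta b$, i.e.\ $c'_i(g,h) = c_i(g,h)\, b_i(g)\, b_{\rho_g^{-1}(i)}(h)\, b_i(gh)^{-1}$ in multiplicative notation. For each index $i$ with $\rho_g(i) = \rho_h(i) = i$ I would substitute this into the product
$$
c'_i(g,g^{-1})^{-1}\,c'_i(1,1)^{-1}\,c'_i(h,g^{-1})\,c'_i(g,hg^{-1})
$$
and check that all the $b$-factors cancel. The key observations that make this work are that $\rho_g$ fixes $i$ (so $\rho_g^{-1}(i) = i$), that $\rho_{g^{-1}}$ also fixes $i$, and — crucially — that since $g$ and $h$ commute and both fix $i$, the index $i$ is fixed by $hg^{-1}$, by $g^{-1}$, and by $1$ as well. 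Tracking the arguments: from $c'_i(g,g^{-1})^{-1}$ one gets $b_i(g)^{-1} b_i(g^{-1})^{-1} b_i(1)$; from $c'_i(1,1)^{-1}$ one gets $b_i(1)^{-1}$; from $c'_i(h,g^{-1})$ one gets $b_i(h)\, b_i(g^{-1})\, b_i(hg^{-1})^{-1}$; and from $c'_i(g,hg^{-1})$ one gets $b_i(g)\, b_i(hg^{-1})\, b_i(h)^{-1}$ (using $\rho_g^{-1}(i)=i$ and $g\cdot hg^{-1} = h$ in the group, together with $i$ being fixed throughout so that the twisted action is trivial on the $i$-th coordinate). Multiplying these four contributions, every $b$-factor appears once with each sign and the product telescopes to $1$, so the $i$-th summand for $c'$ equals the $i$-th summand for $c$, and therefore $\chi_{(\rho,c')} = \chi_{(\rho,c)}$.

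The main obstacle is purely the index-chasing in move (i): one must be careful that the $G$-module structure on $(\C^\times)^n_\rho$ is twisted by $\rho$, so that $g \cdot b(h)$ means $b_{\rho_g^{-1}(i)}(h)$ in the $i$-th coordinate, and one must repeatedly use that the relevant group elements all fix $i$ to collapse these twisted actions to the identity on the $i$-th coordinate. Once that is set up correctly the cancellation is automatic, and combined with move (ii) this proves the lemma.
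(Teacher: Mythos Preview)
Your proposal is correct and is essentially the same argument as the paper's: both substitute the coboundary relation into the explicit formula of Theorem~\ref{big} and verify that all the $b$-factors cancel termwise over the indices fixed by both $\rho_g$ and $\rho_h$. The only cosmetic difference is that you split the equivalence into the two moves (relabel by $f$, then modify by a coboundary) and handle them separately, whereas the paper carries out both at once in a single chain of equalities; the cancellation you track is identical to the one displayed there.
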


\begin{proof}
Given two equivalent representations given by $\rho$, $[c]\in
H^2(G;(\C^{\times} )_{\rho}^n)$ and $\rho '$, $[c']\in
H^2(G;(\C^{\times} )_{\rho'}^n)$, there exists $f\in \Sigma _n$ such that $\rho
'=f^{-1} \rho f$ and $[c']=[f\cdot c]$, with $(\delta b)(g,h)=g\cdot b(h)-b(gh)+b(g)=c'(g,h)-f\cdot c(g,h).$

Then

\begin{align*}
\chi ' (g,h) & =\sum _{i=\rho _g '(i)=\rho _h '(i)}  c'_i(g,g^{-1})^{-1}c'_i(1,1)^{-1}c'_i(h,g^{-1})c'_i(g,hg^{-1})\\
& = \sum _{i=f\rho _gf^{-1}(i)=f\rho _hf^{-1}(i)}   \frac{b_i(1)}{c_{f^{-1}(i)}(g,g^{-1}) b_i(g^{-1})b_i(g)} \cdot \frac{b_i(1)}{c_{f^{-1}(i)}(1,1)b_i(1)b_i(1)}\\
& \qquad \qquad \qquad \cdot \frac{c_{f^{-1}(i)}(h,g^{-1}) b_i(g^{-1})b_i(h)}{b_i(hg^{-1})} \cdot \frac{c_{f^{-1}(i)}(g,hg^{-1})b_i(hg^{-1})b_i(g)}{b_i(h)}\\
& =\sum _{i=\rho _g(i)=\rho _h(i)}  c_{i}(g,g^{-1})^{-1}c_{i}(1,1)^{-1}c_{i}(h,g^{-1})c_{i}(g,hg^{-1})\\
&=\chi (g,h).
\end{align*}

\end{proof}

\begin{lemma}
The character respects the additive and multiplicative structure of representations.
\end{lemma}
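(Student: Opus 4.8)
The plan is to prove the two claims — additivity with respect to $\oplus$ and multiplicativity with respect to $\otimes$ — directly from the explicit formula in Theorem~\ref{big}, working at the level of the combinatorial data $(\rho, c)$ rather than going through the abstract trace argument already sketched in the text. The first step is to pin down how the data $(\rho, c)$ behaves under the two operations on representations. For a direct sum of representations $(\rho^{(1)}, c^{(1)})$ on $[n_1]$ and $(\rho^{(2)}, c^{(2)})$ on $[n_2]$, the permutation $\rho_g^{(1)} \oplus \rho_g^{(2)} \in \Sigma_{n_1+n_2}$ acts as $\rho_g^{(1)}$ on $\{1,\dots,n_1\}$ and as $\rho_g^{(2)}$ (shifted by $n_1$) on the rest, and the cocycle satisfies $(c^{(1)} \oplus c^{(2)})_i = c^{(1)}_i$ for $i \le n_1$ and $c^{(2)}_{i-n_1}$ otherwise; this follows from the block-sum description of direct sum on 1- and 2-morphisms. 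Similarly, for the tensor product on $[n_1 n_2]$, indexed by pairs $(i,j)$, the permutation is $(\rho_g^{(1)} \otimes \rho_g^{(2)})(i,j) = (\rho_g^{(1)}(i), \rho_g^{(2)}(j))$ and the cocycle is $(c^{(1)} \otimes c^{(2)})_{(i,j)} = c^{(1)}_i \cdot c^{(2)}_j$, again read off from the definition of $\otimes$ on 2-morphisms together with the fact that $\phi_{g,h}$ for the tensor product is the tensor of the two $\phi_{g,h}$'s.

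Next I would substitute these descriptions into the formula
$$\chi(g,h) = \sum_{i \,:\, \rho_g(i)=\rho_h(i)=i} c_i(g,g^{-1})^{-1} c_i(1,1)^{-1} c_i(h,g^{-1}) c_i(g,hg^{-1}).$$
For the direct sum, the set of common fixed points of $\rho_g^{(1)} \oplus \rho_g^{(2)}$ and $\rho_h^{(1)} \oplus \rho_h^{(2)}$ splits as the disjoint union of the common fixed points in the two blocks, and on each block the summand is exactly the corresponding summand for $\chi^{(1)}$ or $\chi^{(2)}$, so the sum splits as $\chi^{(1)}(g,h) + \chi^{(2)}(g,h)$. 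For the tensor product, a pair $(i,j)$ is a common fixed point of the tensored permutations iff $i$ is a common fixed point in the first factor and $j$ in the second; since each of the four cocycle factors is multiplicative in the pair coordinate, the $(i,j)$-summand factors as (the $i$-summand of $\chi^{(1)}$) times (the $j$-summand of $\chi^{(2)}$), and summing over the product index set gives $\chi^{(1)}(g,h) \cdot \chi^{(2)}(g,h)$ by distributing the product of sums. Both computations are short once the bookkeeping of indices is set up.

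The main obstacle is not any calculation but the first step: verifying cleanly that the cocycle attached to $\rho^{(1)} \otimes \rho^{(2)}$ is the pointwise product $c^{(1)} \otimes c^{(2)}$ in the indexing by pairs, since this requires unwinding how the coherence 2-isomorphisms $\phi_{g,h}$ interact with the tensor product of 2-matrices and checking that the resulting scalars on the nonzero diagonal entries really are products $c^{(1)}_i(g,h)\, c^{(2)}_j(g,h)$. The analogous statement for $\phi_1$ (and hence for the $c_i(1,1)$ factor) must be checked as well. Once these structural identities are in hand, the rest is a matter of matching indexing conventions, and the claimed additivity and multiplicativity follow immediately from Theorem~\ref{big}; this also gives the promised explicit, self-contained proof replacing the abstract argument noted earlier in the section.
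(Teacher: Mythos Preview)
Your proposal is correct and follows essentially the same approach as the paper: identify the permutation and cocycle data for the direct sum (block sum, cocycle given piecewise on the two blocks) and tensor product (product permutation on pairs, cocycle given by $c_i c'_{i'}$), then plug directly into the formula of Theorem~\ref{big} and split the sum additively or factor it multiplicatively. The paper simply asserts the structural identities $\tilde{c}_i$ and $\overline{c}_{(i,i')} = c_i c'_{i'}$ that you flag as the main thing to check, so your caution there is well placed but does not represent a different strategy.
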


\begin{proof}
Let $\rho, [c]$ and $\rho ', [c']$ represent two representations of dimensions $n,n'$.. The direct sum representation is given by $\tilde{\rho}_h$, the block sum of the matrices $\rho_h$ and $\rho '_h$ for every $h$ and the cocycle $\tilde{c}$ with

$$\tilde{c}_i=
\begin{cases}
 c_i & \text{if }   i\leq n,\\
 c'_{i-n} & \text{if }   i>n.
\end{cases}
$$

The character of the direct sum is

\begin{align*}
\tilde{\chi}(g,h)&=\sum _{i=\tilde{\rho} _g(i)=\tilde{\rho} _h(i)} \tilde{c}_i(g,g^{-1})^{-1}\tilde{c}_i(1,1)^{-1}\tilde{c}_i(h,g^{-1})\tilde{c}_i(g,hg^{-1})\\
&=\sum _{n\geq i=\rho _g(i)=\rho _h(i)} c_i(g,g^{-1})^{-1}c_i(1,1)^{-1}c_i(h,g^{-1})c_i(g,gh^{-1})\\
&+\sum _{\substack{i>n\\ i-n=\rho '_g(i-n)=\rho '_h(i-n)}} c'_{i-n}(g,g^{-1})^{-1}c'_{i-n}(1,1)^{-1}c'_{i-n}(h,g^{-1})c'_{i-n}(g,hg^{-1})\\
&=\chi(g,h)+\chi' (g,h).
\end{align*}

On the other hand, let $\overline{\rho},[\overline{c}]$ denote the tensor product of $\rho, [c]$ and $\rho ', [c']$. From the definition of the tensor product and using the labeling above for the set of $nn'$ elements, it is not hard to see that

$$(\overline{\rho}_g)_{(i,i'),(j,j')}=(\rho _g)_{i,j}(\rho '_g)_{i',j'},$$

$$\overline{c}_{(i.i')}=c_ic'_{i'}.$$

Then $(i,i')$ is fixed by $\overline{\rho}_h$ if and only if $i$ is fixed by $\rho _h$ and $i'$ is fixed by $\rho '_h$.

Thus

\begin{align*}
\overline{\chi}(g,h)&= \negthickspace \negthickspace \negthickspace \negthickspace \negthickspace
\sum _{(i,i')=\overline{\rho} _g(i,i')=\overline{\rho} _h(i,i')} \negthickspace \negthickspace \negthickspace \negthickspace \negthickspace \negthickspace \overline{c}_{(i,i')}(g,g^{-1})^{-1}\overline{c}_{(i,i')}(1,1)^{-1}\overline{c}_{(i, i')}(h,g^{-1})\overline{c}_{(i,i')}(g,hg^{-1})\\
&= \negthickspace \negthickspace \negthickspace \negthickspace \negthickspace \sum_{\substack{(i,i')\\ i=\rho _g(i)=\rho _h(i)\\ i'=\rho _g(i')=\rho _h(i')}} \negthickspace \negthickspace \negthickspace \negthickspace \negthickspace
((c_ic'_{i'})(g,g^{-1})(c_ic'_{i'})(1,1))^{-1} (c_ic'_{i'})(h,g^{-1})(c_ic'_{i'})(g,hg^{-1})\\
&=\biggl( \sum _{ i=\rho _g(i)=\rho _h(i)} c_i(g,g^{-1})^{-1}c_i(1,1)^{-1}c_i(h,g^{-1})c_i(g,hg^{-1}) \biggr)\\
&\quad \cdot \biggl( \sum _{ i'=\rho '_g(i')=\rho '_h(i')} c'_{i'}(g,g^{-1})^{-1}c'_{i'}(1,1)^{-1}c'_{i'}(h,g^{-1})c'_{i'}(g,hg^{-1})\biggr)\\
&=\chi(g,h)\chi' (g,h).
\end{align*}

Hence we see that the characters respect the additive an multiplicative structures on the representations.

\end{proof}

\begin{remark}
One can also use Theorem \ref{big} to reproduce the formula for the character of the induced representation which  appears in \cite[Corollary 7.6]{nora}:

 $$
    \chi_{\on{ind}}(g,h) = \frac{1}{|H|} \sum_{s^{-1}(g,h)s\in H\times
      H}\chi(s^{-1}gs,s^{-1}hs).
  $$
\end{remark}

One might hope that, analogous to the case of 1-characters,  the map from equivalence classes of 2-representations to characters is injective. This turns out not to be true, as the following counterexample shows.

\begin{example}
 
We will consider two 2-representations of $\Sigma _3$ of dimension 8 with trivial cocycle, so they amount to a group homomorphism $\rho : \Sigma _3 \rightarrow \Sigma _8$, that is, they are permutation representations. Note also that they are isomorphic are permutations representations if and only if they are isomorphic as 2-representations. Since they have trivial cocycle, the character is given by

$$\chi (g,h)=\sum _{i=\rho _g(i)=\rho _h(i)} 1=\#\{i=\rho _g(i)=\rho _h(i)\}.$$

Let $\rho $ be given by three blocks: the regular representation (action of $\Sigma _3$ on itself), and two trivial blocks. Let $\rho '$ be given by three blocks as well: 2 blocks with the action of $\Sigma _3$ on $\Sigma _3/\langle(12)\rangle$ and one block with the action of $\Sigma _3$ on $\Sigma _3/\langle (123) \rangle$.

Note that these two 2-representations are not isomorphic: $\Sigma _3$ fixes the last two elements of $\rho $ while $\Sigma _3$ fixes no element of $\rho '$.



On the other hand, we can prove that these two representations have the same character: the pairs of commuting elements in $\Sigma _3$ are those containing 1 and $\{(123), (132)\}$.

We can directly compute the characters: $\chi  (1,1)=\chi ' (1,1)=8$; $\chi (1,g)=\chi '(1,g)=2$ for all $g\neq 1$ and $\chi ((123),(132))=\chi '((123),(132))=2$.
\end{example}

\end{document}